
\documentclass[11pt,a4paper]{article}
\usepackage{amsmath}
\usepackage{amsthm}
\usepackage{amsfonts}
\usepackage{amssymb}
\usepackage{stmaryrd}
\usepackage{latexsym}

\addtolength{\hoffset}{1.5cm} \setlength{\textwidth}{17.0cm} \setlength{\voffset}{-2.5in}
\addtolength{\voffset}{2cm} \setlength{\headsep}{0.5cm} \voffset=-0.5in \hoffset=-0.5in

\newtheorem{theorem}{Theorem}
\newtheorem{lemma}{Lemma}


\theoremstyle{definition}
\newtheorem{definition}{Definition}



\begin{document}

 \tolerance2500

\title{\Large{\textbf{On definition of CI-quasigroup}}}
\author{\normalsize {N.N.~Didurik and V.A.~Shcherbacov}
}

 \maketitle

\begin{abstract}
  Groupoid $(Q, \cdot)$ in which equality $(xy)Jx = y$ is true for all $x, y \in Q$, where $J$ is a map of the set $Q$,  is a CI-quasigroup.

\medskip

\noindent \textbf{2000 Mathematics Subject Classification:} 20N05

\medskip

\noindent \textbf{Key words and phrases:} quasigroup, loop, CI-quasigroup, CI-groupoid, left CI-groupoid.
\end{abstract}

\bigskip

\section{Introduction}

Necessary definitions can be found in \cite{RHB, VD, HOP, 2017_Scerb}.

\begin{definition} \label{MAIN_QUAS_DEF} Binary groupoid $(Q, \circ)$ is called a quasigroup if for any ordered pair $(a, b)\in Q^2$
there exist the unique solutions $x, y \in Q$ to the equations $x \circ a = b$ and $a \circ y = b$ \cite{VD}.
\end{definition}

\begin{definition}\label{Loop_Existenc}
A quasigroup $(Q,\cdot)$ with an   element $1 \in Q$, such that $1\cdot x = x\cdot 1 = x$ for all $x\in Q$, is called a {\it loop}.
\end{definition}

We start from  classical definition of Artzy \cite{RA0}.
\begin{definition}
Loop $(Q, \cdot)$ satisfying one of the equivalent identities $x \cdot y  Jx = y$, $xy \cdot Jx = y$, where $J$ is a bijection of the set $Q$ such that $x\cdot Jx=1$,  is called a CI-loop.
\end{definition}
In \cite{RA0} it is proved that $J$ is an automorphism of loop $(Q, \cdot)$.

\begin{definition}
Quasigroup $(Q, \cdot)$ with the identity $xy \cdot Jx = y$, where $J$ is a map of the set $Q$, is called a CI-quasigroup \cite{BEL_TSU}.
\end{definition}
Notice, in this case the map $J$ is a permutation of the set $Q$ \cite{BEL_TSU}. In any CI-quasigroup the permutation $J$ is unique \cite[Lemma 2.25]{2017_Scerb}.

\begin{definition} \label{CI_groupoid}
Groupoid  $(Q, \cdot)$ with the identity
\begin{equation} \label{CI_left}
xy\cdot J_rx=y,
\end{equation}
where $J_r$  is a  map of the set $Q$ into itself, is  called a left CI-groupoid.

Groupoid  $(Q, \cdot)$ with the identity
\begin{equation} \label{CI_right}
J_l x \cdot yx=y,
\end{equation}
where $J_l$  is a map of the set $Q$ into itself, is  called a right CI-groupoid.

Groupoid  $(Q, \cdot)$ with both identities (\ref{CI_left})  and (\ref{CI_right})  is  called a  CI-groupoid.
\end{definition}
Definition \ref{CI_groupoid} is given in  \cite{BEL_TSU}. A groupoid with the equations (\ref{CI_left}) and (\ref{CI_right}) is  called a CI-groupoid in \cite{Izb_Labo}.

Any CI-groupoid is a quasigroup \cite{BEL_TSU}. In  CI-quasigroup the identities (\ref{CI_left}) and (\ref{CI_right}) are equivalent  \cite{BEL_TSU}. From the results of \cite{BEL_TSU} it follows that any left  CI-groupoid is a left quasigroup.

From the results of Keedwell and Shcherbacov (see, for example, \cite[Proposition 3.28]{2017_Scerb}) it follows that  the left CI-groupoid in which the map $J_r$ is  bijective, is a CI-quasigroup.
Any finite left  CI-groupoid is a quasigroup \cite{Izb_Labo}.

\section{Result}

\begin{lemma} \label{Left_Q}
Any left  CI-groupoid is a left quasigroup \cite{BEL_TSU}.
\end{lemma}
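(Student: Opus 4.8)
The plan is to read the defining identity (\ref{CI_left}) as a statement about translations. For $a\in Q$ write $L_a\colon x\mapsto a\cdot x$ and $R_a\colon x\mapsto x\cdot a$ for the left and right translations of $(Q,\cdot)$, and let $\varepsilon$ denote the identity map of $Q$. Then (\ref{CI_left}) says precisely that $R_{J_r x}\bigl(L_x(y)\bigr)=y$ for all $x,y\in Q$, i.e.
\[
R_{J_r x}\,L_x=\varepsilon\qquad\text{for every }x\in Q .
\]
From this I would first record the easy half: each $L_x$ has a left inverse, hence is injective, so the equation $x\cdot y=b$ has at most one solution in $y$; applying (\ref{CI_left}) to that solution shows moreover that it can only be $y=b\cdot J_r x$. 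Dually, every $R_{J_r x}$ is onto.

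It then remains to show that $y_0:=b\cdot J_r x$ really solves $x\cdot y=b$, i.e.\ that $x\cdot(b\cdot J_r x)=b$ for all $x,b\in Q$; equivalently, that every $L_x$ is surjective, equivalently (given the previous paragraph) that every $R_{J_r x}$ is injective. Here I would use two consequences of (\ref{CI_left}): substituting $x\mapsto xy$, $y\mapsto J_r x$ gives
\[
y\cdot J_r(xy)=J_r x ,
\]
and one further application of (\ref{CI_left}) (with first argument $y$ and second argument $J_r(xy)$) then gives $J_r(xy)=J_r x\cdot J_r y$, so $J_r$ is an endomorphism of $(Q,\cdot)$; the first identity already shows $J_r(Q)\subseteq L_x(Q)$ for every $x$. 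Moreover, from $w\cdot J_r x=b\cdot J_r x$ one gets, by comparing the two instances $(w\cdot J_r x)\cdot J_r w=J_r x=(b\cdot J_r x)\cdot J_r b$ of (\ref{CI_left}) and cancelling the equal left factor $w\cdot J_r x=b\cdot J_r x$ using injectivity of the left translation $L_{w\cdot J_r x}$, that $J_r w=J_r b$. Hence the surjectivity of the left translations is reduced to the injectivity of the single map $J_r$.

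The injectivity of $J_r$ is the substantial point, and I expect it to be the main obstacle: the hypothesis applied at one fixed first argument only delivers injectivity of $L_x$, and one really needs to exploit (\ref{CI_left}) at varying first arguments, which is exactly what the analysis of Belousov and Tsurkan in \cite{BEL_TSU} supplies. Once $J_r$ is known to be injective, the cancellation argument above shows each $R_{J_r x}$ is injective; being also onto it is a bijection, so $L_x=R_{J_r x}^{-1}$ is a bijection. Thus $x\cdot y=b$ is uniquely solvable in $y$ for all $x,b\in Q$, i.e.\ $(Q,\cdot)$ is a left quasigroup.
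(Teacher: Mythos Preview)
Your uniqueness argument is exactly the paper's: from $ax_1=ax_2$ one multiplies on the right by $J_r a$ and applies (\ref{CI_left}) to obtain $x_1=x_2$.

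For existence, however, you do not actually give a proof; you carry out a reduction and then stop. The derived identities $y\cdot J_r(xy)=J_r x$ and $J_r(xy)=J_r x\cdot J_r y$ are correct, and your cancellation argument showing that injectivity of $R_{J_r x}$ (equivalently surjectivity of $L_x$) would follow from injectivity of $J_r$ is sound. But you then explicitly defer the injectivity of $J_r$ to \cite{BEL_TSU}, so as a self-contained argument the lemma remains unproved. That missing step is a genuine gap in your proposal.

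The paper's proof is organised quite differently and does not analyse $J_r$ at all. It names the candidate $x=b\cdot J_r a$ and then records, as identity (\ref{Left_quas_Eq_1}), that $a\cdot(b\cdot J_r a)=b$, obtained by ``substituting the last expression in (\ref{Left_quas_Eq})''. In other words, the paper simply asserts $L_aR_{J_r a}=\varepsilon$ and moves on, whereas you---more scrupulously---observe that (\ref{CI_left}) literally gives only $R_{J_r a}L_a=\varepsilon$, and that passing to the other composition is where the content lies. Your route has the merit of locating the difficulty precisely (it is exactly the injectivity of the single map $J_r$); the paper's route has the merit of brevity but does not pause over this point. To complete your version you must still supply a proof that $J_r$ is injective, or equivalently---via your own inclusion $J_r(Q)\subseteq L_a(Q)$---that $J_r$ is surjective.
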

\begin{proof}
We prove that in the left CI-groupoid $(Q, \cdot)$ the equation
\begin{equation} \label{Left_quas_Eq}
a\cdot x = b
\end{equation}
 has the unique solution. From the  equation  (\ref{Left_quas_Eq}) we have $ax\cdot J_ra = b\cdot J_r a$, $x= b\cdot J_ra$. If we substitute  last expression in
(\ref{Left_quas_Eq}), then we obtain the following equality:
\begin{equation} \label{Left_quas_Eq_1}
a\cdot bJ_ra = b.
\end{equation}
Uniqueness. Suppose that there exist two solutions of equation  (\ref{Left_quas_Eq}), say, $x_1$ and $x_2$. Then $ax_1=ax_2$, $ax_1\cdot J_r a =ax_2\cdot J_r a$ and from equality (\ref{CI_left}) we obtain that  $x_1=x_2$.

Therefore any left translation $L_x$ of groupoid $(Q, \cdot)$ is a bijective map.
\end{proof}

\begin{lemma} \label{Bijective_CI_Trans}
There exists a bijection between the set $Q$ and the set $\cal R$,  the map $J_r$ is bijective and $J_r Q= Q$.
\end{lemma}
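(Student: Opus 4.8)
The plan is to translate the hypothesis (\ref{CI_left}) entirely into the language of translation maps and then extract everything from it. Write $L_a,R_a$ for the left and right translations, $L_ay=ay$, $R_ay=ya$. By Lemma~\ref{Left_Q} every $L_a$ is a bijection, and (\ref{CI_left}) says exactly that $R_{J_rx}\circ L_x$ is the identity for each $x$; since $L_x$ is invertible this sharpens to $R_{J_rx}=L_x^{-1}$ and $L_x\circ R_{J_rx}=\mathrm{id}$, so in particular every $R_{J_rx}$ is a bijection of $Q$. Writing $a\backslash b$ for the unique solution of $ax=b$ (available by Lemma~\ref{Left_Q}), these identities read $u\cdot J_rx=x\backslash u$ and $x\cdot(u\cdot J_rx)=u$ for all $u,x\in Q$. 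The first assertion of the lemma is then immediate: $c\mapsto R_c$ is onto $\mathcal R$ by definition, and it is injective because $R_c=R_d$ forces $L_yc=L_yd$ for all $y$, and a single injective $L_y$ already gives $c=d$.

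Next I would produce a formula for $J_r$ that is independent of the element used to compute it. Putting $y:=J_rz$ in (\ref{CI_left}) gives $(x\cdot J_rz)\cdot J_rx=J_rz$; rewriting $x\cdot J_rz=z\backslash x$ and then $(z\backslash x)\cdot J_rx=x\backslash(z\backslash x)$ (two applications of $u\cdot J_rv=v\backslash u$) yields $J_rz=x\backslash(z\backslash x)$ for all $x,z$, so the right-hand side does not depend on $x$; equivalently $z\backslash x=x\cdot J_rz$. This one relation gives injectivity of $J_r$. First, $x\mapsto L_x$ is injective: $L_a=L_b$ means $a\backslash v=b\backslash v$ for all $v$, hence $a\backslash(z\backslash a)=J_rz=b\backslash(z\backslash b)=a\backslash(z\backslash b)$, so $z\backslash a=z\backslash b$ and then $a=b$. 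Then $J_rz_1=J_rz_2$ gives $z_1\backslash x=x\cdot J_rz_1=x\cdot J_rz_2=z_2\backslash x$ for all $x$, so $L_{z_1}=L_{z_2}$ and $z_1=z_2$. Thus $J_r$ is a bijection of $Q$ onto $J_rQ$ (and, using the same relation, every $R_c$ turns out to be injective).

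The remaining claim $J_rQ=Q$ is the crux, and I expect it to be the hard part. Put $N:=Q\setminus J_rQ$ and suppose $N\neq\varnothing$; fix $t\in N$. If some $x\cdot t$ were in $J_rQ$ then $R_{x\cdot t}$ would be a bijection, so $z\cdot(x\cdot t)=x$ for some $z$, i.e. $z\backslash x=x\cdot t$, i.e. $t=x\backslash(x\cdot t)=x\backslash(z\backslash x)=J_rz\in J_rQ$, a contradiction; hence $R_t(Q)\subseteq N$ for every $t\in N$. So every column indexed by a point of $N$ takes all its values in $N$, which means $L_a(N)\subseteq N$ for all $a\in Q$, and (using $a\cdot J_rw=w\backslash a$ once more) $L_a(N)=N$ whenever $a\in J_rQ$. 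In the finite case this already closes the proof, since $R_t$ would then be an injection of $Q$ into the proper subset $N$, which is impossible; in general I would finish by playing the invariance of $N$ off against the bijection $Q\leftrightarrow\mathcal R$ established above and the injectivity of all right translations, forcing $N=\varnothing$. With $J_rQ=Q$ in hand, $J_r$ is a bijection, which completes Lemma~\ref{Bijective_CI_Trans}, and then the quoted result of Keedwell and Shcherbacov makes $(Q,\cdot)$ a CI-quasigroup.
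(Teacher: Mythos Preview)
One small point first: in the paper $\mathcal R$ denotes $\{R_{J_rx}:x\in Q\}$, not the set of all right translations, so the bijection $Q\leftrightarrow\mathcal R$ intended by the lemma is the composite $x\mapsto L_x\mapsto L_x^{-1}=R_{J_rx}$. For this to be injective one needs $x\mapsto L_x$ injective; the paper simply asserts this (``$x\leftrightarrow L_x$''), whereas your formula $J_rz=x\backslash(z\backslash x)$ together with the argument you build on it actually supplies the missing justification. In that respect your write-up is more complete than the paper's.

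The genuine gap is the surjectivity of $J_r$. You rightly isolate $J_rQ=Q$ as the crux and your finite-case argument is correct, but the closing sentence --- finish ``by playing the invariance of $N$ off against the bijection $Q\leftrightarrow\mathcal R$ \dots\ and the injectivity of all right translations, forcing $N=\varnothing$'' --- is not a proof. Those ingredients yield only $|Q|=|\mathcal R|=|J_rQ|$ and, via an injective $R_t:Q\to N$, also $|N|=|Q|$; for infinite $Q$ none of this excludes $N\neq\varnothing$, and the invariance facts $R_t(Q)\subseteq N$, $L_a(N)\subseteq N$, $L_p(N)=N$ for $p\in J_rQ$ do not close the argument either. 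The paper's own proof does not fill this gap: after recording $Q\leftrightarrow\mathcal L\leftrightarrow\mathcal R$ it simply writes ``Therefore \dots\ the map $J_r$ is bijective and $J_rQ=Q$'' with no further reasoning. So your proposal is at least as complete as the paper's at this step --- and a good deal more candid about where the difficulty lies --- but as written neither argument establishes $J_rQ=Q$ for infinite $Q$.
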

\begin{proof}
We can rewrite the identity (\ref{CI_left})  in the following translation form:
\begin{equation} \label{translation_Equat}
R_{J_rx}L_x =\varepsilon.
\end{equation}
From the equality  (\ref{translation_Equat}) and Lemma \ref{Left_Q} it follows that the map $R_{J_r d}$ is a bijection of the set $Q$ for any fixed element $d\in Q$.

 There exists a bijection between the set $Q$ and the set $\cal{L}$ of all left translations  of groupoid $(Q, \cdot)$. Namely $x\leftrightarrow L_x$, $Q\leftrightarrow \cal L$.

   From the equality (\ref{translation_Equat}) we have that there exists a  bijection between  the set $\cal{L}$ and the set $\cal R$ of all translations (bijections) of the form $R_{J_rx}$, namely, $L_x \leftrightarrow R_{J_rx}$, $\cal L \leftrightarrow \cal R$.
   
Therefore there  exists a bijection between the set $Q$ and the set $\cal R$,  the map $J_r$ is bijective and $J_r Q= Q$.
\end{proof}

\begin{theorem} \label{CI_Group_Is_a_quas}
Any left CI-groupoid $(Q, \cdot)$ is a CI-quasigroup.
\end{theorem}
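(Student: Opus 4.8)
The plan is to reduce everything to showing that $(Q,\cdot)$ is a quasigroup: once both left and right translations are known to be bijections, the groupoid is a quasigroup in the sense of Definition \ref{MAIN_QUAS_DEF}, and since it already carries the identity (\ref{CI_left}) with $J_r$ a map of $Q$ into itself, it is a CI-quasigroup by definition. By Lemma \ref{Left_Q} every left translation $L_x$ is already a bijection, so the entire task is to prove that every right translation $R_y$ is a bijection as well.

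The key ingredient is Lemma \ref{Bijective_CI_Trans}, which tells us that $J_r$ is a bijection and $J_r Q = Q$. Consequently every element $y \in Q$ can be written as $y = J_r x$ for a (unique) $x \in Q$. Rewriting (\ref{CI_left}) in translation form gives $R_{J_r x} L_x = \varepsilon$, that is, $R_{J_r x} = L_x^{-1}$; since $L_x$ is a bijection by Lemma \ref{Left_Q}, so is $R_{J_r x} = R_y$. Hence \emph{every} right translation of $(Q,\cdot)$ is a bijection.

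Combining this with Lemma \ref{Left_Q}, for any ordered pair $(a,b)\in Q^2$ the equation $x\cdot a = b$ has the unique solution $x = R_a^{-1}b$ and the equation $a\cdot y = b$ has the unique solution $y = L_a^{-1}b$; thus $(Q,\cdot)$ is a quasigroup. Since it satisfies identity (\ref{CI_left}) with $J_r$ a permutation of $Q$ (again Lemma \ref{Bijective_CI_Trans}), it is a CI-quasigroup, and we are done.

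The only subtle point in this argument is the step where we pass from ``$R_{J_r d}$ is a bijection for each fixed $d$'' to ``every right translation is a bijection''; this is exactly where the surjectivity $J_r Q = Q$ of Lemma \ref{Bijective_CI_Trans} is essential, and it is the conceptual heart of the proof. Everything else is routine manipulation of translations.
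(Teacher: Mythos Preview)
Your argument is correct and follows the same route as the paper: invoke Lemma~\ref{Left_Q} for the left translations, then use Lemma~\ref{Bijective_CI_Trans} (specifically the surjectivity $J_rQ=Q$) together with $R_{J_rx}=L_x^{-1}$ to conclude that every right translation is a bijection, hence $(Q,\cdot)$ is a quasigroup satisfying (\ref{CI_left}). The paper's own proof is terser but makes exactly the same appeal to Lemmas~\ref{Left_Q} and~\ref{Bijective_CI_Trans}; your explicit identification of the surjectivity of $J_r$ as the crucial step is, if anything, a clearer exposition of the same idea.
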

\begin{proof}
Taking into consideration Lemma \ref{Left_Q} we   must only prove that  in the left CI-groupoid $(Q, \cdot)$ the equation
\begin{equation} \label{Right_quas_Eq}
y\cdot a = b
\end{equation}
 has the unique solution. Using the language of translations we re-write equation (\ref{Right_quas_Eq}) in the following form: $R_a y = b$. By Lemma \ref{Bijective_CI_Trans} the map $R_a$ is a bijection and right translation $R_a$ there exists for any $a \in Q$. Then $y = R^{-1}_a b$.

Therefore any left CI-groupoid $(Q, \cdot)$ is a CI-quasigroup.
\end{proof}

Notice,  Theorem \ref{CI_Group_Is_a_quas} can be proved using Lemmas   \ref{Left_Q},  \ref{Bijective_CI_Trans}  and Proposition 3.28 from  \cite{2017_Scerb}.

\bigskip

\noindent \footnotesize
{Natalia Didurik \hfill Victor Shcherbacov\\
Faculty of Physics and Mathematics \hfill Institute of Mathematics and Computer Science \\
Shevchenko Transnistria State University \hfill  Academy of Sciences of Moldova \\
25 October str., 128, Tiraspol, MD-3300 \hfill 5 Academiei str., Chi\c{s}in\u{a}u  MD-2028 \\
Moldova \hfill  Moldova \\
E-mail: \emph{natnikkr83@mail.ru} \hfill E-mail: \emph{scerb@math.md }}

\end{document}